\newtheorem{thm}{Theorem}[section]
\newtheorem{lem}[thm]{Lemma}
\newtheorem{prop}[thm]{Proposition}
\theoremstyle{definition}
\newtheorem{defn}[thm]{Definition}
\theoremstyle{remark}
\newtheorem{rem}[thm]{Remark}
\newtheorem{exa}[thm]{Example}
\numberwithin{equation}{section}
\newcommand{\set}[1]{\left\{#1\right\}}
\newcommand{\Real}{\mathbb R}
\newcommand{\Natural}{\mathbb N}
\newcommand{\nin}{n \in \Natural}
\newcommand{\kin}{k \in \Natural}
\newcommand{\such}{{\ | \ }}
\newcommand{\limn}{\lim_{n \to \infty}}
\newcommand{\dfn}{\, := \,}
\newcommand{\prob}{\mathbb{P}}
\newcommand{\conv}{\mathsf{conv}}
\newcommand{\qprob}{\mathbb{Q}}
\newcommand{\expec}{\mathbb{E}}
\newcommand{\expecp}{\expec_\prob}
\newcommand{\expecq}{\expec_\qprob}
\newcommand{\expecqg}{\expec_{\qprob_\gamma}}
\newcommand{\Lb}{\mathbb{L}}
\newcommand{\lz}{\Lb^0}
\newcommand{\lzp}{\lz_{+}}
\newcommand{\lzpp}{\lz_{++}}
\newcommand{\Ss}{\mathcal{S}}
\newcommand{\F}{\mathcal{F}}
\newcommand{\ud}{\mathrm d}
\newcommand{\inner}[2]{\left \langle #1 , \, #2 \right \rangle}
\newcommand{\num}{num\'eraire}
\newcommand{\U}{\mathbb{U}}
\newcommand{\pare}[1]{\left(#1\right)}
\newcommand{\bra}[1]{\left[#1\right]}
\newcommand{\dbra}[1]{[\kern-0.15em[ #1 ]\kern-0.15em]}
\newcommand{\dbraco}[1]{[\kern-0.15em[ #1 [\kern-0.15em[}
\newcommand{\dbraoc}[1]{]\kern-0.15em] #1 ]\kern-0.15em]}
\newcommand{\C}{\mathcal{C}}
\newcommand{\Cmax}{\C^{\mathsf{max}}}
\newcommand{\K}{\mathcal{K}}
\newcommand{\Tl}{\mathcal{T}}
\newcommand{\Cnum}{\C^{\mathsf{num}}}
\newcommand{\indic}{\mathbb{I}}
\newcommand{\absco}{{<\kern-0.53em<}}
\newcommand{\oC}{\overline{\C}}
\newcommand{\oCmax}{{\oC^{\mathsf{max}}}}
\begin{document}

\title[Maximality and num\'eraires in convex subsets of $\lzp$]{Maximality and num\'eraires in convex sets of nonnegative random variables}%

\author{Constantinos Kardaras}%
\address{Constantinos Kardaras, Statistics Department, London School of Economics and Political Science}%
\email{k.kardaras@lse.ac.uk}%

\subjclass[2010]{46A16; 46E30; 60A10}
\keywords{N\'umeraires; maximality; utility maximisation}%

\date{\today}%
\begin{abstract}
We introduce the concepts of max-closedness and \num s of convex subsets of $\lzp$, the nonnegative orthant of the topological vector space $\lz$ of all random variables built over a probability space, equipped with a topology consistent with  convergence in probability. Max-closedness asks that maximal elements of the closure of a set already lie on the set. We discuss how \num s  arise naturally as strictly positive optimisers of certain concave monotone maximisation problems. It is further shown that the set of \num s of a convex, max-closed and bounded set of $\lzp$ that contains at least one strictly positive element is dense in the set of its maximal elements.
\end{abstract}

\maketitle


\section*{Introduction}

\subsection*{Discussion}

Let $\lz$ denote the set of all (equivalence classes of real-valued) random variables built over a probability space, equipped with a metric topology under which convergence of sequences coincides with convergence in probability. Denote by $\lzp$ the nonnegative orthant of $\lz$. In many problems of interest---notably, in the field of mathematical finance---one seeks maximisers of a concave and strictly monotone (increasing) functional $\U$ over convex set $\C \subseteq \lzp$. In order to ensure that such optimisers exist, some closedness property of $\C$ should be present. The strict monotonicity of $\U$ \emph{a priori} implies that, if optimisers exist, they must be maximal elements of $\C$ with respect to the natural lattice structure of $\lz$; therefore, a natural condition to enforce is that maximal points of the closure of $\C$ already lie in $\C$. We then refer to the set $\C$ as being \textsl{max-closed}, and the collection $\Cmax$ of all its maximal elements is regarded as the ``outer boundary'' of $\C$.

Concave maximisation problems as the one described above are particularly amenable to first-order analysis. Morally speaking, a maximiser of a concave functional $\U$ over $\C$ should also be a maximiser of a \emph{nice} nonzero linear functional over $\C$. When \emph{nice} means \emph{continuous}, such an element is called a \textsl{support point} of $\C$ in traditional functional-analytic framework, and existence of a supporting nonzero continuous linear functional is typically provided by an application of the geometric form of the Hahn-Banach theorem. Unfortunately, $\lz$ is rather unsuitable\footnote{Note, however, that whenever $\C \subseteq \lzp$ is a convex and bounded (in measure) set, there exists a probability $\qprob$, equivalent to the underlying one, such that $\C$ is bounded in $\Lb^1(\qprob)$---see discussion after Theorem \ref{thm: bishop-phelps}; although this sometimes facilitates the analysis on $\C$, in general the $\Lb^0$-topology does not coincide with the $\Lb^1(\qprob)$-topology on $\C$.} for application of standard convex-analytic techniques. More  precisely, when the probability space is non-atomic:
\begin{itemize}
	\item $\lz$ fails to be locally convex, which implies that a rich body of results (including the Hahn-Banach theorem) cannot be used;
	\item the topological dual of $\lz$ contains only the zero functional \cite[Theorem 2.2, page 18]{MR808777}; in particular, as the Namioka-Klee theorem \cite{NK} suggests, there is no real-valued nonzero positive linear functional on $\lz$.
\end{itemize}
In particular, convex sets in $\lz$ \emph{a fortiori} lack support points according to the usual definition. The previous issue notwithstanding, this work aims at exploring special elements of convex subsets of $\lzp$ which can be regarded as support points. More precisely, we discuss the notion of a \textsl{\num} $g$ of a set $\C \subseteq \lzp$, asking that $g$ is strictly positive (in the sense that $\set{g = 0}$ is a null set) and there exists a probability measure $\qprob$, equivalent to the underlying probability measure, such that $\expecq \bra{f/g} \leq 1$ holds for all $f \in \C$, where ``$\expecq$'' denotes expectation under $\qprob$. As is argued in the article (see Remark \ref{rem: supp_points}), \num s are closely related to support points in the classical sense, where the supporting ``dual element'' corresponds to a $\sigma$-additive, $\sigma$-finite, positive measure, equivalent to the underlying probability measure. Furthermore, by means of the rather wide-encompassing example in \S~\ref{subsec: util_max_num} it is rigorously illustrated that optimisers for a large class of concave monotone maximisation problems over convex sets are indeed \num s according to the previous definition.

N\'umeraires are maximal elements of convex sets. A natural question is to explore the richness of $\Cnum$, the class of \num s of a set $\C \subset \lzp$, in its outer boundary $\Cmax$. To ensure that the discussion is not void, it is established that if $\C$ contains a strictly positive element and is convex, max-closed and bounded in $\lz$, then $\Cnum \neq \emptyset$. It is further shown by an example in \S~\ref{subsec: exa} that there exists a space $\lz$ and a convex and compact set $\C \subset \lzp$ containing an element in $\Cmax \setminus \Cnum$. (An infinite-dimensional space is required for such example. In finite-dimensional Euclidean spaces all boundary points of a closed and convex set are support points, and it can be shown in the present non-standard set-up that any strictly positive maximal element is a \num. Note also that in infinite-dimensional spaces there are examples of proper closed convex subsets that have \emph{no} support points---see \cite{MR0149241}.) On the positive side, it is shown in Theorem \ref{thm: bishop-phelps} for convex, max-closed and bounded sets $\C \subset \lzp$ that contain at least one strictly positive element, $\Cnum$ is dense in $\Cmax$. In the context of Banach spaces, Bishop and Phelps Theorem \cite[Theorem 7.43]{MR2378491} states that support points of closed and convex sets are dense on the boundary of the set. Therefore, Theorem \ref{thm: bishop-phelps} can be seen as an analogue of the Bishop-Phelps theorem in an extremely non-standard environment, where the topological space in question fails to even be locally convex.

\smallskip

The structure of the paper is simple. Section \ref{sec: max} introduces and discusses maximal elements and max-closedness, Section \ref{sec: num} introduces \num s and shows that there exist maximal elements that are not \num s, while in Section \ref{sec: BP} the aforementioned density of \num s in the maximal elements for convex, max-closed and bounded in $\lz$ subsets of $\lzp$ that contains a strictly positive element is stated and proved.

\subsection*{Preliminaries}

Throughout the paper, $\lz$ denotes the set of all real-valued random variables over a probability space $(\Omega, \F, \prob)$. The usual practice of not distinguishing a random variable from the equivalence class (modulo $\prob$) it generates is followed. All relationships between elements of $\lz$ are to be understood in the $\prob$-a.s. sense. Define $\lzp \dfn \set{f \in \lz \such f \geq 0}$ to be the nonnegative orthant of $\lz$; furthermore, let $\lzpp$ be the class of all $f \in \lzp$ such that $f > 0$.

We use $\qprob \sim \prob$ (respectively, $\qprob \ll \prob$) to denote that $\qprob$ is a probability measure that is equivalent with  (respectively, absolutely continuous with respect to) $\prob$. The symbol $\expecq$ is used to denote expectation with respect to $\qprob \ll \prob$; we simply use $\expec$ instead of $\expecp$ for expectation under $\prob$.

The topology on the vector space $\lz$ is the one induced by the translation-invariant metric $\lz \times \lz \ni (f, g) \mapsto \expec \bra{1 \wedge |f-g|}$, where ``$\wedge$'' is used to denote the minimum operation. With the above definition, $\lz$ becomes a complete metric space and $\lzp$ a closed and convex subset. Convergence of sequences under this topology is convergence in $\prob$-measure. (In fact, the topology only depends on the equivalence class of $\prob$.) Unless explicitly stated otherwise, any topological property (closedness, etc.) pertaining to subsets of $\lz$ will be understood under the aforementioned topology.

For $\C \subseteq \lzp$, $\oC \subseteq \lzp$ will denote the closure of $\C$. A set $\C \subseteq \lzp$ will be called \textsl{bounded} if $\lim_{\ell \to \infty} \sup_{f \in \C} \prob[f > \ell] = 0$---as can be easily seen, the last property coincides with boundedness of $\C$ when $\lz$ is viewed as a topological vector space \cite[Definition 5.36]{MR2378491}. If $\C \subseteq \lzp$ is bounded, it is straightforward to check that $\oC$ is bounded as well. 
Finally, $\Ss \subseteq \lzp$ will be called \textsl{solid} if the conditions $g \in \Ss$, $f \in \lzp$ and $f \leq g$ imply $f \in \Ss$.

\section{Maximal Elements and Max-Closedness}
\label{sec: max}

An element $f \in \C \subseteq \lzp$ is called \textsl{maximal in $\C$} if the conditions $f \leq g$ and $g \in \C$ imply $f = g$; the notation $\Cmax$ is used to denote the set of all maximal elements in $\C$.

The next definition introduces a concept of closedness that additionally takes into account the lattice structure of $\lz$. It is exactly tailored for problems related to concave monotone maximisation, as is shown in Proposition \ref{prop: conv_mon_max} below.

\begin{defn} \label{dfn: max-closed}
A set $\C \subseteq \lzp$ will be called \textsl{max-closed} if $\oCmax \subseteq \C$.
\end{defn}

In words, max-closedness asks that all maximal elements in the closure of a set are already contained in the set itself. Max-closedness is a weaker property than closedness (see Example \ref{exa: bad_C} later on), and has played an important background role in the proof of the Fundamental Theorem of Asset Pricing in \cite{MR1304434}.


The next result implies in particular that $\Cmax \neq \emptyset$ whenever $\C \neq \emptyset$ is max-closed and bounded. We omit the simple argument for its proof, which relies on a use of Zorn's lemma and has already appeared in \cite[proof of Lemma 4.3]{MR1304434} and \cite[paragraph after Theorem 3.1]{MR1647282}.

\begin{lem} \label{lem: zorn}
Let $\C \subseteq \lzp$ be max-closed and bounded. Then, for every $f \in \C$ there exists $h \in \Cmax$ with $f \leq h$.
\end{lem}

%

The next result gives an alternative definition of max-closedness for bounded subsets of $\lzp$.

\begin{lem} \label{lem: bdd_max_closed}
Suppose that $\C \subseteq \lzp$ is bounded. Then, $\C$ is max-closed if and only if $\oCmax = \Cmax$.
\end{lem}

\begin{proof}
If $\oCmax = \Cmax$, the fact that $\oCmax \subseteq \C$ immediately implies that $\C$ is max-closed.

Suppose now that $\C$ is max-closed. Since $\C \subseteq \oC$, the set-inclusion $\oCmax \subseteq \C$ is equivalent to $\oCmax \subseteq \Cmax$. Now, let $f \in \Cmax$. Since $\oC$ is closed (in particular, max-closed) and bounded (because $\C$ is bounded; note that boundedness of $\C$ is only required in this is the part of the proof), Lemma \ref{lem: zorn} implies the existence of $g \in \oCmax$ with $f \leq g$. Since $\C$ is max-closed, it follows that $g \in \C$, which with $f \in \Cmax$ implies that $f= g$. Therefore, $f \in \oCmax$, which shows that  $\Cmax \subseteq \oCmax$ and completes the proof.
\end{proof}

\begin{rem}
If $\C \subseteq \lzp$ fails to be bounded, Lemma \ref{lem: bdd_max_closed} is not necessarily true. For example, take $\Omega = (0, 1)$, $\F$ be the Borel $\sigma$-field on $\Omega$, and let $\prob$ be Lebesgue measure on $(\Omega, \F)$. Define $\C = \set{1} \cup \set{f \in \lzp \such \prob \bra{f < 1} > 0}$. Then, $\oC = \lzp$, so that $\oCmax = \emptyset$ and $\C$ is trivially max-closed. However, $1 \in \Cmax$, which shows that $\Cmax = \oCmax$ is violated.
\end{rem}

The following example demonstrates, \emph{inter alia}, that max-closedness is a strictly weaker notion than closedness.

\begin{exa} \label{exa: bad_C}
Let $\Omega = (0, 1)$, $\F$ be the Borel $\sigma$-field on $\Omega$, and let $\prob$ be Lebesgue measure on $(\Omega, \F)$. Consider $\C = \set{f \in \lzp \such \expec \bra{f} = 1}$. The set-inclusion $\oC \subseteq \set{f \in \lzp \such \expec [f] \leq 1}$ follows from Fatou's lemma. Now, let $z_n \dfn n^{-1} \indic_{(0, n^{-1})}$, so that $z_n \in \C$ for all $\nin$. Note that $\limn z_n = 0$. For any $f \in \lzp$ with $\expec [f] \leq 1$, the $\C$-valued sequence $\pare{f + (1 - \expec [f]) z_n}_{\nin}$ converges to $f$, which shows that $f \in \oC$. It follows that $\oC = \set{f \in \lzp \such \expec [f] \leq 1}$; in particular, $\C$ is not closed. However, note that $\oCmax = \set{f \in \lzp \such \expec [f] = 1} = \Cmax$, which implies that $\C$ is max-closed.

In this setting, note that $\Ss = \set{f \in \lzp \such f \leq g \text{ for some } g \in \C}$ (the solid hull of $\C$) is equal to $\set{f \in \lzp \such \expec [f] \leq 1}$, which is closed. This did not happen by chance: it is shown in Proposition \ref{prop: max-closed-solid} that the solid hull of a convex, max-closed and bounded set is always closed.

Let us make one more observation. With $\partial \C$ denoting the topological boundary of $\C$, it actually holds that $\partial \C = \oC$. Indeed, for $f \in \oC = \set{f \in \lzp \such \expec [f] \leq 1}$ note that $(f + 2 z_n)_{\nin}$ is a $(\lzp \setminus \oC)$-valued sequence which converges to $f$.
\end{exa}

In the example above, $\oC$ turned out to be a much larger set than $\C$. Even though $\C$ is not closed, in many cases of interest the ``important'' elements of $\C$ lie on the ``outer boundary'' $\Cmax$ of $\C$. (As was seen in Example \ref{exa: bad_C}, the topological boundary of $\C \subseteq \lzp$ might be simply too large to provide useful information about optimal elements.) To this effect, the next result demonstrates that the notion of max-closedness ties nicely together with concave monotone maximisation. 

\begin{prop} \label{prop: conv_mon_max}
Suppose that $\U: \lzp \mapsto [- \infty, \infty)$ is concave, upper semi-continuous and monotone, the latter meaning that $\U(f) \leq \U(g)$ holds whenever $f \leq g$. Let $\C \subset \lzp$ be convex, max-closed and bounded. Then, there exists $g \in \Cmax$ such that $\sup_{f \in \C} \U(f) = \U(g) < \infty$.
\end{prop}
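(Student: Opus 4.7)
The plan is to combine a maximizing-sequence argument with the convex-combinations convergence lemma available in $\lzp$ for bounded sets, plus a Zorn-type lifting to produce a maximal element. Set $S \dfn \sup_{f \in \C} \U(f) \in [-\infty, \infty]$ and pick a maximizing sequence $(f_n)_{\nin} \subseteq \C$ with $\U(f_n) \to S$.

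Since $\C$ is bounded in $\lz$, the classical Koml\'os-type lemma (as in Delbaen--Schachermayer 1994, Lemma A1.1) produces $\tilde f_n \in \conv \set{f_k \such k \geq n}$, which belong to $\C$ by convexity, such that $\tilde f_n \to f^*$ $\prob$-a.s.\ for some $f^* \in \lzp$; finiteness of $f^*$ promotes a.s.\ convergence to convergence in probability, so $f^* \in \oC$. Concavity gives $\U(\tilde f_n) \geq \inf_{k \geq n} \U(f_k)$, whose right-hand side tends to $S$, so $\liminfn \U(\tilde f_n) \geq S$; upper semi-continuity then yields $\U(f^*) \geq \limsupn \U(\tilde f_n) \geq S$. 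Since $\U(f^*) < \infty$ by hypothesis, automatically $S < \infty$.

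Next, lift $f^*$ to a maximal element of $\oC$. Consider $\set{h \in \oC \such h \geq f^*}$, partially ordered by the $\prob$-a.s.\ ordering, and apply Zorn's lemma. To verify the chain-upper-bound hypothesis, let $\set{h_\alpha}$ be a chain in this set, extract a countable cofinal subchain $h_{\alpha_n} \uparrow h^*$ where $h^*$ is the essential supremum of the chain, and note that boundedness of $\oC$ in $\lzp$ forces $h^* < \infty$ $\prob$-a.s.\ (since $\prob[h^* > \ell] = \limn \prob[h_{\alpha_n} > \ell] \leq \sup_{h \in \oC} \prob[h > \ell] \to 0$ as $\ell \to \infty$), after which a.s.\ monotone convergence passes to convergence in probability and closedness of $\oC$ places $h^* \in \oC$. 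Zorn then produces a maximal element $g$ of $\set{h \in \oC \such h \geq f^*}$; such $g$ is maximal in all of $\oC$, since any $g' \in \oC$ with $g' \geq g$ also dominates $f^*$ and hence equals $g$. Max-closedness yields $g \in \oCmax \subseteq \C$, and a parallel dominance argument places $g \in \Cmax$. Finally, monotonicity gives $\U(g) \geq \U(f^*) \geq S$, while $g \in \C$ gives $\U(g) \leq S$; hence $\U(g) = S < \infty$.

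The main obstacle I anticipate is the Zorn step---specifically, verifying the chain-upper-bound property inside $\oC$. The essential-supremum construction rests on two facts: boundedness of $\C$ transfers to $\oC$ and forces $h^* \in \lzp$, and $\oC$ is closed under increasing a.s.\ limits of its elements. Both are short to establish but deserve isolation, perhaps as a small addendum in the appendix alongside the existing lemma on solid hulls. Everything else---concavity plus convex combinations plus upper semi-continuity to produce a near-optimal $f^* \in \oC$, and monotonicity plus max-closedness to transport optimality to a genuine maximizer $g \in \Cmax$---is routine.
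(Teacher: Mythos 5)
Your argument is correct, and its skeleton coincides with the paper's: optimize over the closure $\oC$, push the optimizer up to an element of $\oCmax$, and let max-closedness plus monotonicity finish the job. The two key steps are, however, filled in differently. For the existence of an optimizer over $\oC$, the paper simply cites \cite[Lemma 4.3]{MR2651515}, whereas you reprove that fact from scratch via Delbaen--Schachermayer forward convex combinations combined with concavity and upper semi-continuity; this buys self-containedness at the cost of reproducing a known argument. For the lifting to a maximal element, the paper invokes Lemma \ref{lem: max_help_1}, whose proof runs through Lemma \ref{lem: non-empty C-supp} and hence through the num\'eraire-type theorem of \cite{Kar10}, so the dominating maximal element is actually produced as an outer support point; you instead use Zorn's lemma with essential suprema along chains, which is precisely the alternative the paper mentions, but does not carry out, in the remark following the proposition. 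Your route is more elementary, resting only on the two small facts you rightly flag as needing isolation (that $\oC$ inherits boundedness, and that it is stable under increasing $\prob$-a.s.\ limits of a.s.\ finite elements); the paper's route reuses machinery that is needed anyway in the proof of Theorem \ref{thm: bishop-phelps} and yields the extra information that the dominating element can be taken to support the set. One cosmetic point: the countable increasing subfamily you extract from a chain need not be cofinal in the order-theoretic sense, but this is immaterial---its essential supremum already equals that of the whole chain and therefore dominates every element of it, which is all the upper-bound verification requires.
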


\begin{proof}
Note that $\oC$ is convex, closed and bounded. Since $\U$ is concave and upper semi-continuous, \cite[Lemma 4.3]{MR2651515} implies the existence of $g_0 \in \oC$ such that $\U(g_0) = \sup_{f \in \oC} \U(f)$. Furthermore, since $\oC$ is closed (in particular, max-closed) and bounded, Lemma \ref{lem: zorn} implies that there exists $g \in \oCmax$ such that $g_0 \leq g$. Since $\U$ is monotone, $\U(g_0) \leq \U(g)$ holds, which means that $\U(g) = \sup_{f \in \oC} \U(f)$. Finally, since $\C$ is max-closed, $g \in \Cmax$ follows.
\end{proof}

\begin{rem}
Functions $\U : \lzp \mapsto [-\infty, \infty)$ with the properties in the statement of Proposition \ref{prop: conv_mon_max} appear in problems of mathematical finance, where $\U$ represents a utility functional. An interesting---in terms of structure---example is given in \S~\ref{subsec: util_max_num}.
\end{rem}

The next result (which was announced in Example \ref{exa: bad_C}) associates convexity, max-closeness, boundedness, solidity, and closedness. Before we state it, a definition is required. Let $(f_n)_{\nin}$ be a sequence in $\lzp$. Any sequence $(g_n)_{\nin}$ with the property that $g_n$ lies in the convex hull of $\set{f_n, f_{n+1}, \ldots}$ for all $\nin$ will be called a \textsl{sequence of forward convex combinations of $(f_n)_{\nin}$}.

\begin{prop} \label{prop: max-closed-solid}
Let $\C \subseteq \lzp$ be convex, max-closed and bounded, and define its solid hull $\Ss = \set{f \in \lzp \such f \leq g \text{ for some } g \in \C}$. Then, $\Ss$ is solid, convex, closed and bounded.
\end{prop}

\begin{proof}
It is straightforward to check that $\Ss$ is solid, convex and bounded. It remains to show that it is closed. Let $(f_n)_{\nin}$ be a $\Ss$-valued sequence converging to $f \in \lzp$; we shall establish that $f \in \Ss$. By passing to a subsequence if necessary, assume that $(f_n)_{\nin}$ converges $\prob$-a.s. to $f$. (The importance of $\prob$-a.s. convergence is that any sequence of forward convex combinations of $(f_n)_{\nin}$ also converges to $f$, which will be tacitly used in the proof later  on---$\Real$ is a locally convex space, while $\lz$ is not.) For each $\nin$, there exists $g_n \in \C$ such that $f_n \leq g_n$. Note that $\oC$ is convex, closed and bounded; then, \cite[Lemma A1.1]{MR1304434} implies the existence of a sequence of forward convex combinations of $(g_n)_{\nin}$ that $\prob$-a.s. converges to some $g \in \oC$. Since $(f_n)_{\nin}$ converges $\prob$-a.s. to $f$ and $f_n \leq g_n$ for all $\nin$, it follows that $f \leq g$. Now, invoking Lemma \ref{lem: zorn}, it follows that there exists $h \in \oCmax$ such that $g \leq h$. As $\oCmax \subseteq \C$ and $f \leq g$, we obtain that $f \leq h \in \C$, which implies that $f \in \Ss$.
\end{proof}

The final result of this section---Proposition \ref{prop: stab_conv_outer_bdry}---is concerned with ``stability'' of convergence of sequences to points of the outer boundary of convex subsets of $\lzp$. Before stating it, we mention the following result, which is a special case of \cite[Theorem 1.3]{MR3003684} and will be used thrice in the proof of Proposition \ref{prop: stab_conv_outer_bdry}.

\begin{thm} \label{thm: kz} Let $(f_n)_{n \in \Natural}$ be a sequence in $\lzp$ such that $\conv \pare{\set{f_n \such \nin}}$ is bounded. Assume that
$\limn f_n = g$ holds for some $g \in \lzp$. Then, the following statements are equivalent:
  \begin{enumerate}
  \item Every sequence of forward convex combinations of $(f_n)_{n \in
      \Natural}$ converges to $g$.
  \item If a sequence of forward convex combinations of
    $(f_n)_{n \in \Natural}$ is convergent, its limit is
     $g$.
  \end{enumerate}
(In the case $f = 0$, the equivalence of (1) and (2) holds even without assuming $\limn f_n = 0$.)

If any of the equivalent conditions above fail, the set $\K \subseteq \lzp$ of all possible limits of forward convex combinations of $(f_n)_{\nin}$ is such that $\set{g} \varsubsetneq \K$, and $g \leq h$ holds for all $h \in \C$.

\end{thm}

\begin{prop} \label{prop: stab_conv_outer_bdry}
Let $\C\subseteq \lzp$ be convex. Let $(f_n)_{\nin}$ be a $\C$-valued sequence converging to $g \in \Cmax$. Then,
\begin{enumerate}
	\item Any sequence of forward convex combinations of $(f_n)_{\nin}$ also converges to $g$.
	\item Any $\C$-valued sequence $(g_n)_{\nin}$ such that $f_n \leq g_n$ holds for all $\nin$ also converges to $g$.
\end{enumerate}
\end{prop}

\begin{proof}
In the sequel, $(f_n)_{\nin}$ is a $\C$-valued sequence that converges to $g \in \Cmax$.

Suppose that $(g_n)_{\nin}$ is a sequence of forward convex combinations of $(f_n)_{\nin}$ that converges to $h \neq g$. By Theorem \ref{thm: kz}, this would contradict the fact that $g \in \Cmax$. Therefore, \emph{any} convergent sequence of forward convex combinations of $(f_n)_{\nin}$ must have the same limit $g$ that $(f_n)_{\nin}$ has. Again, by Theorem \ref{thm: kz} it follows that \emph{all} sequences of forward convex combinations of $(f_n)_{\nin}$ converge to $g$, which establishes statement (1).

Now, pick any $\C$-valued sequence $(g_n)_{\nin}$ such that $f_n \leq g_n$ holds for all $\nin$, and define $\zeta_n \dfn g_n - f_n$ for $\nin$; then, $\zeta_n \in \lzp$ for all $\nin$. If $\limn \zeta_n = 0$ is established, $\limn f_n = g$ will imply $\limn g_n = g$. For $\nin$, let $\Tl_n$ denote the closure of the convex hull of $\set{\zeta_k \such k = n, n+1, \ldots}$, and set $\Tl_\infty \dfn \bigcap_{\nin} \Tl_n$. For $\psi \in \Tl_\infty$, there exists a sequence $(\psi_n)_{\nin}$ of forward convex combinations of $(\zeta_n)_{\nin}$ such that $\limn \psi_n = \psi$. Since $g \in \Cmax$, $\Tl_\infty$ cannot contain any $\psi \in \lzp$ with $\prob[\psi > 0] > 0$; indeed, if this was the case, using statement (1) of Proposition \ref{prop: stab_conv_outer_bdry} that was just proved, one would be able to construct a $\C$-valued sequence $(h_n)_{\nin}$ with $\limn h_n = g + \psi$, which would mean that $(g + \psi) \in \C$ and would contradict $g \in \Cmax$. On the other hand, as each $\Tl_n$, $\nin$, is convex, closed and bounded and $(\Tl_n)_{\nin}$ is a non-increasing sequence, it follows from \cite{MR2651515} that $\Tl_\infty \neq \emptyset$. We conclude that $\Tl_\infty = \set{0}$---in other words, all convergent sequences of forwards convex combinations of $(\zeta_n)_{\nin}$ converge to zero. Then, another application of Theorem \ref{thm: kz} (for the special case of zero limit) implies that $\limn \zeta_n = 0$, completing the proof of statement (2).
\end{proof}

\section{Num\'eraires} \label{sec: num}

\subsection{The \num \ property}

In the theory of financial economics, a convex set $\C \subseteq \lzp$ frequently models the class of all possible choices available for future consumption given (normalised) unit budget. Any element $g \in \C \cap \lzpp$ (note that $g$ is \emph{strictly} positive) can be used as a \num, in the sense of a benchmark under which the value of all other consumption choices is compared to; more precisely, for $f \in \C$, the random variable $f / g$ measures $f$ in units of $g$. We regard $g \in \C$ to be a ``good'' \num \ if there exists a valuation probability $\qprob \sim \prob$ that gives value at most one to all elements $f \in \C$ denominated in units of $g$. (For more motivation and discussion on the previous theme, we send the interested reader to \cite{MR1381678}.) 
 
\begin{defn} \label{dfn: num}
Let $\C \subseteq \lzp$ be such that $\C \cap \lzpp \neq \emptyset$. An element $g \in \C$ will be called a \textsl{\num \  of $\C$} if $g \in \lzpp$ and there exists $\qprob \sim \prob$ such that $\expecq \bra{f / g} \leq $ holds for all $f \in \C$. The set of all \num s of $\C$ is denoted by $\Cnum$.
\end{defn}

\begin{rem} \label{rem: num_is_max}
For $\C \subseteq \lzp$ with $\C \cap \lzpp \neq \emptyset$, it is straightforward to check that a \num \ of $\C \subseteq \lzp$ is a maximal element of $\C$; in other words, $\Cnum \subseteq \Cmax \cap \lzpp$.
\end{rem}

\begin{rem} \label{rem: supp_points}
One may offer a functional-analytic interpretation of \num s, in terms of ``support points'' of convex sets, as we now explain. For a measure (note that all measures will be assumed countably additive, non-negative and $\sigma$-finite) $\mu \sim \prob$, consider the linear mapping
\begin{equation} \label{eq: inner_prod}
\lzp \ni f \mapsto \inner{\mu}{f} \dfn \int_\Omega f \ud \mu \in [0, \infty].
\end{equation}
Let $\C \subseteq \lzp$ be such that $\C \cap \lzpp \neq \emptyset$. It is then straightforward to check that $g \in \C \cap \lzpp$ is a \num \  of $\C$ if and only if there exists there exists a measure $\mu \sim \prob$ such that $\sup_{f \in \C} \inner{\mu}{f} = \inner{\mu}{g} < \infty$. 
Although the mapping of \eqref{eq: inner_prod} fails to be continuous in general (in view of Fatou's lemma, it is at least lower semi-continuous), we may still regard a \num \ as a non-standard support point of $\C$. Note, however, that there are special properties involved in the definition of a \num \ $g$ of $\C$; not only does $g$ have to be a \emph{strictly} positive element, but also the ``supporting functional'' given by $\mu$ has to be strictly positive (in the sense that $\mu \sim \prob$) as well.
\end{rem}

\subsection{A canonical example} \label{subsec: util_max_num}

According to Proposition \ref{prop: conv_mon_max}, optimisers of concave, upper semi-continuous and monotone functionals over convex, max-closed and bounded sets $\C \subset \lzp$ exist and lie on $\Cmax$. As mentioned in the introductory discussion, additional analysis using first order conditions suggests that optimisers should ``support'' the convex set $\C$. In fact, the following example (which builds on Proposition \ref{prop: conv_mon_max}) demonstrates that these optimisers are indeed \num s of $\C$, elaborating on the connection of \num s and support points mentioned in Remark \ref{rem: supp_points}.

Consider a \textsl{utility random field} $U: \Omega \times (0, \infty) \mapsto \Real$, such that $U(\cdot, x) \in \lz$ for all $x \in (0, \infty)$ and $U(\omega, \cdot) : (0, \infty) \mapsto \Real$ is a strictly increasing, concave and continuously differentiable function for all $\omega \in \Omega$. Define the derivative (with respect to the spatial variable) random field $U': \Omega \times (0, \infty) \mapsto \Real_+$ in the obvious way. By means of continuity, the definition of $U$ and $U'$ is extended so that $U(\cdot, 0) \dfn \lim_{x \downarrow 0} U(\cdot, x)$ and $U'(\cdot, 0) \dfn \lim_{x \downarrow 0} U'(\cdot, x)$---note that the latter random variables may take with positive probability the values $- \infty$ and $\infty$, respectively. Assume in the sequel that the \textsl{Inada condition} $\prob \bra{U'(0) = \infty} = 1$ holds, and that $\expec \bra{0 \vee U(\infty)} < \infty$, where $U(\infty) \dfn \lim_{x \to \infty} U(\cdot, x)$ and ``$\vee$'' denotes the maximum operation. Define the functional $\U: \lzp \mapsto [- \infty, \infty)$ via $\U(f) = \expec \bra{U(f)}$, where for $f \in \lzp$ the map $U(f) : \Omega \mapsto [- \infty, \infty)$ is defined via $\pare{U(f)} (\omega) = U(\omega, f(\omega))$ for $\omega \in \Omega$. Clearly, $\U$ is concave and monotone. A combination of $\expec \bra{0 \vee U(\infty)} < \infty$ and Fatou's lemma implies that $\U$ is upper semi-continuous.

Consider a convex, max-closed and bounded $\C \subset \lzp$ with $\C \cap \lzpp \neq \emptyset$. Proposition \ref{prop: conv_mon_max} provides the existence of $g \in \Cmax$ such that $\U(g) = \sup_{f \in \C} \U(f) < \infty$. In fact, because $\U$ is strictly concave, the previous maximiser in unique. In order to avoid unnecessary technical complications, a final mild assumption involving the optimiser $g$ is introduced: we impose that $\U(a g) > - \infty$ holds for all $a \in (0,1)$, which implies that the function $(0, \infty) \ni a \mapsto \U(a g)$ is concave, strictly increasing and $\Real$-valued. Such mapping must have a finite (right-hand-side) derivative; then, a use of the monotone convergence theorem gives that $\expec \bra{U'(ag) g \indic_{\set{g > 0}}}  < \infty$ holds for all $a \in (0, \infty)$. Define the convex set $\C_g \dfn \set{f \in \C \such f \geq a g \text{ for some } a \in (0,1)}$. Note that $g \in \C_g$; furthermore, since for all $f \in \C$ the $\C_g$-valued sequence $\pare{(1 - n^{-1}) f + n^{-1} g}_{\nin}$ converges to $f$, $\C_g$ is dense in $\C$. Fix $f \in \C_g$ and let $a \in (0,1)$ be such that $f \geq a g$. Since $\U(f) \geq \U(a g) > - \infty$, it holds that $\prob[U(f) = - \infty] = 0$, i.e., $U(f) \in \lz$. Similarly, $\U(g) > - \infty$ implies $U(g) \in \lz$. For $\epsilon \in (0, 1)$, define
\[
f_\epsilon \dfn (1 - \epsilon) g + \epsilon f, \text{ and } \Delta(f_\epsilon \such g) \dfn \frac{U(f_\epsilon) - U(g)}{\epsilon} \in \lz.
\]
The optimality of $g$ gives $\expec  \bra{\Delta(f_\epsilon \such g)} \leq 0$, for all $\epsilon \in (0, 1)$. Note that $\Delta(f_\epsilon \such g) \geq 0$ holds on $\set{f_\epsilon \geq g}$; furthermore, for all $\epsilon \in (0, 1)$, $f_\epsilon \geq a g$ implies that $\Delta(f_\epsilon \such g) \geq - U'(f_\epsilon) (g - f) \geq - U'(a g) g \indic_{\set{g > 0}}$ holds on $\set{f_\epsilon < g}$ . Since $\expec  \bra{U'(a g) g \indic_{\set{g > 0}}} < \infty$, and $\liminf_{\epsilon \downarrow 0} \Delta(f_\epsilon \such g) = U'(g) (f - g)$ holds in the $\prob$-a.s. sense, Fatou's lemma implies that $\expec \bra{U'(g) (f - g)} \leq 0$, where in particular $\prob \bra{U'(g) (f - g) = - \infty} = 0$ is implied. By assumption, there exists $h \in \C \cap \lzpp$. It can be assumed without loss of generality that $h \in \C_g$ (otherwise, replace $h$ by $(h + g)/2$); therefore, $\prob \bra{U'(g) (h - g) = - \infty} = 0$ implies that $g \in \lzpp$, which in particular implies that $U'(g) \in \lzp$. The fact that $\expec \bra{U'(g) g} = \expec \bra{U'(g) g \indic_{\set{g > 0}}} < \infty$ holds allows to write $\expec \bra{U'(g) (f - g)} \leq 0$ as $\expec \bra{U'(g) f} \leq \expec \bra{U'(g) g}$ for all $f \in \C_g$. Upon defining the probability measure $\qprob$ via the recipe $\ud \qprob = \pare{ U'(g) g / \expec  \bra{U'(g) g} } \ud \prob$, note that $\qprob \sim \prob$ and $\expecq \bra{f/ g} \leq 1$ holds for all $f \in \C_g$. As $\C_g$ is dense in $\C$, Fatou's lemma implies that $\expecq \bra{f/ g} \leq 1$ holds for all $f \in \C$. Therefore, $g \in \Cnum$.

\begin{rem} \label{rem: Cnum non-empty}
It is straightforward to construct strictly increasing, concave and continuously differentiable deterministic functions $U: (0, \infty) \mapsto \Real$ such that $U'(0) = \infty$ and $U \pare{(0, \infty)}$ is a bounded subset of $\Real$. With $\U$ defined as in the example above, if $\C$ is convex, max-closed and bounded then the unique maximiser $g$ of $\U$ over $\C$ trivially satisfies $\U(a g) > - \infty$ for all $a \in (0,1)$; therefore, $g \in \Cnum$. In particular, we deduce that $\Cnum \neq \emptyset$ holds whenever $\C \subset \lzp$ with $\C \cap \lzpp \neq \emptyset$ is convex, max-closed and bounded.
\end{rem}

\begin{rem}
In the discussion of the above example, under certain assumptions on the utility random field $U$, the set $\C$ and the optimiser $g \in \Cmax$, it is concluded that $g \in \Cnum$. The most restrictive assumption is the boundedness from above of the utility random field, encoded in the requirement $\expec \bra{0 \vee U(\infty)} < \infty$. This assumption is there to ensure that $\U$ is $[-\infty, \infty)$-valued and upper semi-continuous, in order to allow the invocation of Proposition \ref{prop: conv_mon_max} and obtain existence of an optimiser $g \in \Cmax$. However, if existence of an optimiser $g \in \C$ can be obtained with other methods, in which case Lemma \ref{lem: zorn} ensures that it can be additionally assumed that $g \in \Cmax$, the discussion of the above example goes through even without enforcing boundedness conditions on $U$. (The other, milder, assumptions should of course still be satisfied.) There has been a significant body of work in the field of mathematical finance where existence of optimisers for such types of expected utility maximisation problems is established using convex duality methods; for more examples, see \cite{MR2023886} in the case of deterministic $U$ and \cite{KarZit03} for the case where $U$ may actually be a random field.
\end{rem}

\subsection{Maximal points versus \num s} \label{subsec: exa}

Let $\C$ is convex, max-closed and bounded and such that $\C \cap \lzpp \neq \emptyset$. As was discussed in Remark \ref{rem: num_is_max} and Remark \ref{rem: Cnum non-empty}, it holds that $\Cnum \neq \emptyset$ and $\Cnum \subseteq \Cmax \cap \lzpp$. However, the inclusion $\Cnum \subseteq \Cmax \cap \lzpp$ can be strict, as will be shown below by an example, which has also appeared in \cite{MR2929089}. (Note that there are indeed special---but important---cases where $\Cnum \subseteq \Cmax \cap \lzpp$ can be established; for example, see \cite{MR1381678}.)

Consider the probability space $(\Omega, \F, \prob)$, where $\Omega = (0, \infty)$, $\F$ the Borel $\sigma$-field over $(0, \infty)$, and $\prob$ is a probability measure equivalent to Lebesgue measure on $(0, \infty)$. Define $\xi: \Omega \mapsto (0, \infty)$ via $\xi(\omega) = \omega$ for all $\omega \in (0, \infty)$. Furthermore, define $K \dfn \set{(\alpha, \beta) \in \Real^2 \such 0 \leq \beta \leq \sqrt{\alpha} \leq 1}$, and note that $K$ is a convex and compact subset of $\Real^2_+$. Let $\C \dfn \set{1 - \alpha + (\alpha + \beta) \xi \such (\alpha, \beta) \in K}$. Being the image of $K$ via a continuous linear mapping, $\C$ is a convex and compact subset of $\lzp$---therefore, it is closed (in particular, max-closed) and bounded.

Note that $\prob [\xi \leq \epsilon] > 0$ and $\prob [\xi^{-1} \leq \epsilon] > 0$ hold for all $\epsilon \in (0, \infty)$; given this, $\Cmax = \set{1 - \alpha + (\alpha + \sqrt{\alpha}) \xi \such \alpha \in [0, 1]} \subseteq \lzpp$ follows in a rather straightforward way. In particular, it holds that $1 \in \Cmax \cap \lzpp$. However, we claim that $1 \notin \Cnum$. In fact, we shall show that there cannot exist any $\qprob \ll \prob$ such that $\expecq \bra{f} \leq 1$ holds for all $f \in \C$. To wit, if such a probability measure $\qprob$ existed, $\expecq \bra{1 - \alpha + (\alpha + \sqrt{\alpha}) \xi} \leq 1$ for all $\alpha \in [0, 1]$ would follow. Rearranging, $\expecq \bra{\xi} \leq \alpha / \pare{\alpha + \sqrt{\alpha}} = \sqrt{\alpha} / \pare{\sqrt{\alpha} + 1}$ would hold for all $\alpha \in (0, 1]$. This would imply that $\expecq \bra{\xi} = 0$, i.e., $\qprob \bra{\xi > 0} = 0$ which, in view of $\prob[\xi > 0] = 1$, contradicts the fact that $\qprob$ is a probability measure which is absolutely continuous with respect to $\prob$.

In fact, one can say more: in this example, it holds that $\Cnum = \Cmax \setminus \set{1}$. Indeed, fix $\gamma \in (0,1]$ and define $g_\gamma \dfn 1 - \gamma + (\gamma + \sqrt{\gamma}) \xi$; we shall show that $g_\gamma \in \Cnum$. Note that the law of the random variable $1/g_\gamma$ under $\prob$ is equivalent to Lebesgue measure on $\pare{0, (1 - \gamma)^{-1}}$. Therefore, setting $c_\gamma \dfn \pare{1 + 2 \sqrt{\gamma} } \pare{1 + \sqrt{\gamma}}^{-2}$, the strict inequality $c_\gamma < 1 \leq (1 - \gamma)^{-1}$ implies that there exists a probability $\qprob_\gamma \sim \prob$ such that $\expec_{\qprob_\gamma} \bra{ 1 / g_\gamma} = c_\gamma$. The straightforward calculation $\xi / g_\gamma = \gamma^{-1/2} (1+\sqrt{\gamma})^{-1} - \gamma^{-1/2} (1 - \sqrt{\gamma}) (1 / g_\gamma)$ implies $\expecqg[\xi/g_\gamma] = \gamma^{-1/2} (1+\sqrt{\gamma})^{-1} - \gamma^{-1/2} (1 - \sqrt{\gamma}) c_\gamma = 2 \sqrt{\gamma} \pare{1 + \sqrt{\gamma}}^{-2}$. Therefore, it follows that
\[
\expecqg \bra{ \frac{1 - \alpha + (\alpha + \beta) \xi}{g_\gamma} } = \frac{1 + 2 \sqrt{\gamma} - \alpha + 2 \sqrt{\gamma} \beta}{\pare{1 + \sqrt{\gamma}}^{2}} \leq \frac{1 + 2 \sqrt{\gamma} - \alpha + 2 \sqrt{\gamma \alpha}}{\pare{1 + \sqrt{\gamma}}^{2}}, \quad \text{for all } (\alpha, \beta) \in K.
\]
It is easily seen that the latter expression, as a function of $(\alpha, \beta) \in K$, is maximised when $(\alpha, \beta) = \pare{\gamma, \sqrt{\gamma}}$, and that the maximum is $1$. It indeed follows that $g_\gamma \in \Cnum$. 

Before abandoning this example, a final remark is in order. Even though $1 \in \Cmax \setminus \Cnum$, note that the $\Cnum$-valued sequence $\pare{(1 - n^{-1}) + \pare{n^{-1} + n^{-1 /2}} \xi}_{\nin}$ actually converges to $1$. Theorem \ref{thm: bishop-phelps} in the next section will generalise this observation.

\section{Density of Num\'eraires in Maximal Elements} \label{sec: BP}

\subsection{The main result}

What follows is a density result of \num s in maximal elements for convex, max-closed and bounded sets of $\lzp$ that contain at least one strictly positive element.

\begin{thm} \label{thm: bishop-phelps}
Let $\C \subset \lzp$ be convex, max-closed and bounded, and such that $\C \cap \lzpp \neq \emptyset$. Then, $\Cnum$ is dense in $\Cmax$.
\end{thm}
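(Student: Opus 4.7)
My approach mimics the Bishop--Phelps strategy, with the concave monotone utility framework of Example \ref{exa: util_max_osp} playing the role of continuous linear support functionals. Given $g_0 \in \Cmax$ and an $\lz$-open neighborhood $V$ of $g_0$ (the case $\C = \set{0}$ being trivial), the goal is to exhibit $g \in \Csupp \cap V$; non-emptiness of $\Csupp$ will follow as a by-product. Since $\C$ is $\lz$-bounded, standard arguments of Yan-type provide an equivalent probability $\qprob \sim \prob$ with bounded Radon--Nikodym density such that $M \dfn \sup_{f \in \C} \expecq[f] < \infty$; in particular $\qprob \in \M$.

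The core construction is, for each $\nin$, the $C^1$ concave monotone utility random field
\begin{equation*}
U_n(\omega, x) \dfn \int_0^x \pare{1 \wedge \e^{-n (y - g_0(\omega))}} \ud y, \qquad x \geq 0,
\end{equation*}
whose derivative equals $1$ on $[0, g_0(\omega)]$ and decays exponentially thereafter; in particular $U_n(\omega, x) \leq g_0(\omega) + 1/n$ for all $x$. The associated functional $\U_n(f) \dfn \expecq[U_n(f)]$ is concave, monotone, and continuous on $\lzp$ (by dominated convergence with dominant $g_0 + 1/n$). Proposition \ref{prop: conv_mon_max} supplies an optimizer $g_n \in \Cmax$. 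One verifies that $U_n$ satisfies the hypotheses of Example \ref{exa: util_max_osp} (with $\qprob$ in place of $\prob$, which is legitimate since $\qprob \sim \prob$); the example's conclusion yields $g_n \in \Csupp$ with supporting measure $\mu_n \dfn U_n'(g_n) \cdot \qprob \in \M$, where $0 < \inner{\mu_n}{g_n} \leq M < \infty$ follows from strict positivity of $U_n'$, $\prob[g_n > 0] > 0$ (forced by $\U_n(g_n) \geq \expecq[g_0] > 0$), and the uniform bound $U_n' \leq 1$.

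To establish $g_n \to g_0$ in $\lz$, I observe that the optimality inequality $\U_n(g_n) \geq \U_n(g_0) = \expecq[g_0]$ together with $U_n(\omega, x) \leq g_0(\omega) + 1/n$ yields, by decomposing $U_n(g_n) - g_0$ on $\set{g_n \leq g_0}$ versus $\set{g_n > g_0}$, the one-sided bound $\expecq[(g_0 - g_n)_+] \leq 1/n$; thus $(g_0 - g_n)_+ \to 0$ in probability. For the complementary direction I invoke max-closedness: by the Komlós-type convex compactness available in bounded subsets of $\lzp$, extract convex combinations $\tilde g_n \to g_\infty$ almost surely with $g_\infty \in \oC$ and, passing the preceding $L^1(\qprob)$-estimate through convex combinations, $g_\infty \geq g_0$. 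Lemma \ref{lem: max_help_1} yields some $g' \in \oCmax$ with $g' \geq g_\infty \geq g_0$; by max-closedness $g' \in \C$, and maximality of $g_0$ in $\C$ forces $g' = g_0 = g_\infty$. A subsequence argument then upgrades this to $g_n \to g_0$ in probability along a subsequence, which for large $n$ lies in $V \cap \Csupp$, establishing density.

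The main obstacle is this final upgrade: Komlós convergence of convex combinations does not automatically imply subsequential convergence of the original sequence. The key to resolving it is that each $g_n$ lies in $\Cmax$ and not merely in $\C$. A persistent excess of $g_n$ above $g_0$ on a non-vanishing event, combined with the almost-sure convex-combination limit $g_\infty = g_0$ and the one-sided control $\expecq[(g_0 - g_n)_+] \to 0$, should, through another application of Lemma \ref{lem: max_help_1}, force the existence of an element of $\oCmax \setminus \set{g_0}$ that dominates $g_0$, violating maximality.
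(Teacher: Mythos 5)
Your route to the approximating sequence is genuinely different from the paper's (which takes the $\Lb^2(\prob)$-metric projection $f_n$ of $(1+n^{-1})g_0$ onto $\C \cap \Lb^2(\prob)$ after a solid-hull reduction, and reads the supporting measure $\gamma_n = (1+n^{-1})g_0 - f_n$ off the projection's variational inequality), and the first two thirds of it are essentially sound: the truncated-linear utilities $U_n$ produce optimizers $g_n \in \Cmax$ via Proposition \ref{prop: conv_mon_max}, the first-order condition goes through painlessly because $U_n' \leq 1$ and $\sup_{f \in \C}\expecq\bra{f} < \infty$ remove all integrability issues (though you cannot literally cite Example \ref{exa: util_max_osp}: its hypothesis that some $h \in \C$ satisfies $\prob[h=0]=0$ is not available here; it is only used there to keep $U'(g)$ finite, which is automatic for your $U_n$, so you should rerun the Fatou argument with the lower bound $\Delta(f_\epsilon \such g_n) \geq -g_n$), and the one-sided estimate $\expecq\bra{(g_0-g_n)_+} \leq 1/n$ is correct.

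The final step, however, is a genuine gap, and the repair you sketch does not work as stated. The only elements that Lemma \ref{lem: max_help_1} can manufacture are dominators of limits of forward convex combinations of $(g_n)_{\nin}$, and you have just proved that every such limit equals $g_0$; so "another application of Lemma \ref{lem: max_help_1}" returns $g_0$ itself, not an element of $\oCmax \setminus \set{g_0}$. A persistent excess $(g_n - g_0)_+ \geq \epsilon$ on events of probability $\geq \delta$ is, on its face, compatible with the convex combinations washing the excess out, so maximality of $g_0$ is not obviously violated. What is actually needed is the converse implication --- that if all convergent sequences of forward convex combinations of the nonnegative excesses converge to zero, then the excesses themselves converge to zero in probability --- and this is precisely the nontrivial content of \cite[Theorem 1.3]{KarZit10}, packaged in the paper as statement (2) of Lemma \ref{lem: max_help_2}. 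To invoke that lemma you would set $f_n \dfn g_n \wedge g_0$, so that $f_n \to g_0$ by your one-sided estimate and $f_n \leq g_n$; but $f_n \in \C$ requires $\C$ to be solid and closed, so you also need the solid-hull reduction of Lemma \ref{lem: max-closed-solid} (harmless, since the solid hull has the same maximal elements and the same outer support points). With those two ingredients your argument closes; without them, the concluding paragraph is a conjecture rather than a proof.
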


Keeping in mind the discussion in Remark \ref{rem: supp_points}, the statement of Theorem \ref{thm: bishop-phelps} bears resemblance to the celebrated result of Bishop and Phelps \cite[Theorem 7.43, statement 1]{MR2378491}, stating that support points of closed and convex sets in Banach spaces are dense on the boundary of the set. Note, however, that the present setting is by all means non-standard, especially since $\lz$ typically fails to be locally convex. For a convex and bounded $\C \subset \lzp$, there exists a probability $\qprob \sim \prob$, such that $\sup_{f \in \C} \expecq \bra{f} < \infty$; see, for example, \cite[combination of Lemmata 1, 2 and 3 of page 147]{MR2273672}. 
This fact seems to provide hope that one could use the classical version of the Bishop-Phelps theorem by applying $\Lb^1(\qprob)$-$\Lb^\infty$ duality. In fact, under the assumptions of Theorem \ref{thm: bishop-phelps} it is not hard to see that, if $\C \subseteq \Lb^1_+(\qprob)$, then $\Cmax$ is actually contained in the $\Lb^1(\qprob)$-topological boundary of $\C$. However, for a given $g \in \Cmax$, it is not at all clear that the sequence of (usual) support points that approximates $g$ is $\Cnum$-valued. As the previous issue does not appear \emph{a priori} trivial, a bare-hands alternative route is taken in the proof of Theorem \ref{thm: bishop-phelps}, given in \S~\ref{subsec: proof of b-p} below.


\subsection{Proof of Theorem \ref{thm: bishop-phelps}} \label{subsec: proof of b-p}

Assume that $\C$ is convex, max-closed and bounded, and such that $\C \cap \lzpp \neq \emptyset$. Let $g \in \Cmax$; we shall show that there exists a $\Cnum$-valued sequence $(f_n)_{\nin}$ such that $\limn f_n = g$. We first treat the case where $g \in \Cmax \cap \lzpp$; then, the general case will follow through an approximation argument.

\subsubsection{Case where $g \in \Cmax \cap \lzpp$}

In order to obtain the approximating sequence $(f_n)_{\nin}$, we shall use the construction of \S~\ref{subsec: util_max_num}. For fixed $\nin$, define $U_n : \Omega \times (0, \infty) \mapsto (- \infty, 0)$ via
\[
U_n (x) = - \pare{g / x}^{n}, \quad \forall x \in (0, \infty),
\]
where the dependence on $\omega$ (coming from $g \in \C$) is suppressed, as usual. Note that $U_n$ is concave, strictly increasing, continuously differentiable, bounded above by zero, and that the Inada condition $U_n' (0) = \infty$ is satisfied for all $\nin$. Define $\U_n : \lzp \mapsto [- \infty, 0)$ via $\U_n(f) = \expec \bra{U_n(f)}$ for all $f \in \lzp$ and $\nin$. Note that $\U_n(f) > - \infty$ implies $\U_n(a f) > - \infty$ for all $a \in (0,1)$. In view of the general example in \S~\ref{subsec: util_max_num}, for all $\nin$ we infer the existence of $f_n \in \Cnum$ with the property that $\U_n(f_n) = \sup_{f \in \C} \U_n(f)$. It remains to show that $\limn f_n = g$.


Note first that, since $\U_n(g) = \expec \bra{U_n(g)} = - 1$, it follows that $\expec \bra{U_n(f_n)} \geq -1$ for all $\nin$; in other words, $\expec \bra{\pare{g / f_n}^n} \leq 1$ holds for all $\nin$. In view of Markov's inequality, it holds that
\[
\prob \bra{f_n / g < \beta} \leq \beta^{n} \expec \bra{ \pare{g / f_n}^n} \leq  \beta^{n}, \quad \forall \nin \text{ and } \forall \beta \in (0,1).
\]
The Borel-Cantelli lemma implies that for any fixed $\beta \in (0,1)$, $\beta g \leq \liminf_{n \to \infty} f_n$ holds in the $\prob$-a.s. sense. It then follows that $g \leq \liminf_{n \to \infty} f_n$ holds in the $\prob$-a.s. sense.

We proceed in showing that $\limn \prob \bra{f_n / g > 1 + \epsilon} = 0$ holds for all $\epsilon \in (0,\infty)$. Assume on the contrary that there exists $\epsilon > 0$ and a subsequence $(f_{n_k})_{\kin}$ of $\pare{f_n}_{\nin}$ such that $\prob \bra{f_{n_k} / g > 1 + \epsilon} > \epsilon$ holds for all $\kin$. Since $\C$ is convex and bounded, \cite[Lemma A1.1]{MR1304434} gives the existence of a sequence $\pare{h_k}_{\kin}$ of forward convex combinations of $(f_{n_k})_{\kin}$ that converges to some $h \in \lzp$; since $\C$ is convex, it follows that $h \in \oC$. More precisely, write $h_k = \sum_{m=k}^{l_k} \alpha_{k, m} f_{n_m}$ for all $k \in \Natural$, where $l_k \geq k$, $\alpha_{k, m} \geq 0$ for all $k \in \Natural$ and $m \in \set{k, \ldots, l_k}$, as well as $\sum_{m=k}^{l_k} \alpha_{k, m} = 1$. Convexity implies that
\[
\expec \bra{\pare{g / h_k}^{n_k}} \leq \sum_{m=k}^{l_k} \alpha_{k, m} \expec \bra{\pare{g / f_{n_m}}^{n_k}}, \quad \forall \kin.
\]
Jensen's inequality gives $\expec \bra{\pare{g / f_{n_m}}^{n_k}} \leq \pare{ \expec \bra{\pare{g / f_{n_m}}^{n_m}} }^{n_k / n_m} \leq 1$, for all $\Natural \ni k \leq m \in \Natural$. A combination of the previous gives $\expec \bra{ \pare{g / h_k}^{n_k}} \leq 1$, for all $\kin$. As before, this implies that $g \leq \liminf_{k \to \infty} h_k$ holds in the $\prob$-a.s. sense; in particular, $g \leq h$. On the other hand, the fact that $\prob \bra{f_{n_k} / g> 1 + \epsilon} > \epsilon$ holds for all $\kin$, combined with $\liminf_{k \to \infty} \pare{f_{n_k} / g} \geq 1$ holding in the $\prob$-a.s. sense, implies that $\limsup_{k \to \infty} \expec \bra{\exp(- f_{n_k} / g)} \leq (1 - \epsilon) \exp(-1) + \epsilon \exp(-1 - \epsilon)$. Then, convexity and boundedness of the function $(0, \infty) \ni x \mapsto \exp(- x) \in (0, 1)$ implies that
\begin{align*}
\expec \bra{\exp(- h/g)} &= \lim_{k \to \infty} \expec  \bra{\exp(- h_k/g)} \\
&\leq \limsup_{k \to \infty} \pare{ \sum_{m=k}^{l_k} \alpha_{k, m} \expec \bra{\exp(- f_{n_m}/h)} } \\
&\leq  \limsup_{k \to \infty}  \expec  \bra{\exp(- f_{n_k} /h)}  \leq (1 - \epsilon) \exp(- 1) + \epsilon \exp(- 1 - \epsilon) < \exp(-1).
\end{align*}
We obtain that $\prob \bra{h/g > 1} > 0$, which together with $g \leq h$ contradicts the fact that $g \in \Cmax = \oCmax$, the last set-equality coming from Lemma \ref{lem: bdd_max_closed}. Therefore, $\limn \prob \bra{f_n / g> 1 + \epsilon} = 0$ holds for all $\epsilon \in (0,\infty)$; coupled with the fact that $g \leq \liminf_{n \to \infty} f_n$ holds in the $\prob$-a.s. sense that was previously established, we conclude that $\limn f_n = g$.

\subsubsection{Case of arbitrary $g \in \Cmax$}

Let $g \in \Cmax$, and fix some $f \in \C \cap \lzpp$. For all $\nin$, set $h_n \dfn (1 - 1/n) g + (1 / n) f$ and note that $h_n \in \C \cap \lzpp$. Furthermore, by Lemma \ref{lem: zorn} it follows that for each $\nin$ there exists $g_n \in \Cmax$ with $h_n \leq g_n$; of course, $g_n \in \Cmax \cap \lzpp$ holds for all $\nin$. According to what we have already proved, there exists a $\Cnum$-valued sequence $(f_n)_{\nin}$ such that $\limn \pare{g_n - f_n} = 0$ holds. Theorem \ref{thm: bishop-phelps} will be fully established if we can show that $\limn g_n = g$ holds.  Since $\limn h_n = g$, $g \in \Cmax$ and $h_n \leq g_n$ holds for all $\nin$, this fact follows from statement (2) of Proposition \ref{prop: stab_conv_outer_bdry}, which completes the proof.

%
%
%

%

%
%
%
%

\bibliographystyle{alpha}
\bibliography{spsp}
\end{document}